\newtheorem{definition}{\textsc{Definition}}
\newtheorem{proposition}[definition]{\textsc{Proposition}}
\newtheorem{theorem}[definition]{\textsc{Theorem}}
\newtheorem{corollary}[definition]{\textsc{Corollary}}
\newtheorem{reminder}[definition]{\textsc{Reminder}}
\newtheorem{lemma}[definition]{\textsc{Lemma}}
\newtheorem{statement}[definition]{\textsc{Statement}}
\newtheorem{observation}[definition]{\textsc{Observation}}
\newcommand{\st}{\ensuremath{\ast}}
\newcommand{\slangle}{\langle\,}
\newcommand{\srangle}{\,\rangle}
\newcommand{\wht}{\widehat}
\newcommand{\diff}{\mathrm{d}}
\newcommand{\id}{\mathrm{id}}
\newcommand{\sa}{_{\textstyle{\ensuremath{\mathrm{sa}}}}}
\newcommand{\s}{\mathrm{spec}}
\newcommand{\iu}{\mathrm{i}} 
\newcommand{\range}{\mathrm{ran}}
\begin{document}

\title[Representations of Hermitian Commutative \st-Algebras \ldots]%
{Representations of Hermitian Commutative \st-Algebras by Unbounded Operators}%

\author[M. Thill]{Marco Ch.\ R.\ Thill}


\dedicatory{Dedicated to Christian Berg on the occasion of his sixty-fifth birthday}




\subjclass[2020]{Primary: 47A67; Secondary: 47L60, 47C10}


\keywords{spectral theorem, representation, unbounded operators, Hermitian \st-algebra}

\thanks{Many thanks to K.\ Schm\"{u}dgen for kind correspondence and
reference to \cite{SchIn}. Many thanks go to Torben Maack Bisgaard as well as other.}

\begin{abstract}
We give a spectral theorem for unital representations of
Hermitian commutative unital \st-algebras by possibly
unbounded operators on pre-Hilbert spaces. A better
result is known for the case in which the \st-algebra
is countably generated.

\end{abstract}

\maketitle


\section{\textbf{Statement of the Main Result}}

Our main result is the following theorem:

\begin{theorem}\label{main}%
Let $\pi$ be a unital representation of a Hermitian commutative unital
\st-algebra $A$ on a pre-Hilbert space $H \neq \{ 0 \}$.

The operators $\pi(a)$ $(a \in A)$ are essentially normal in the completion.

There is a spectral measure $P$ on a subset of $\Delta^*(A)$, acting on the completion of $H$,
such that the closure of the operator $\pi(a)$ in the completion of $H$ is given by
\[ \hspace{-9em} (i) \qquad \qquad \qquad \qquad
 \overline{\pi(a)} = \int \widehat{a} \:\diff P \qquad (a \in A). \]

We then have that
\begin{itemize}
 \item[$(ii)$] the spectral resolution of the
 normal operator $\overline{\pi(a)}$ with $a \in A$ is the image
 \,$\widehat{a} \,(P) = P \circ {\widehat{a}}^{\,-1}$ of $P$ under the function \,$\widehat{a}$,
\item[$(iii)$] so the normal operators $\overline{\pi(a)}$ $(a \in A)$ commute spectrally (strongly),
\item[$(iv)$] a bounded operator $b$ on the completion of $H$ commutes with the
normal operators $\overline{\pi(a)}$ $(a \in A)$ if and only if $b$ commutes with $P$,
\end{itemize}
\end{theorem}
Notation and terminology are explained in the following section.
A proof is given in section \ref{mainsect} below.
For countably generated \st-algebras, a better result is available,
cf.\ Savchuk and Schm\"{u}dgen \cite[Theorem 7 p. 60 f.]{SchIn}.
Cf.\ Schm\"{u}dgen \cite[Theorem 7.23 p.\ 148 f.]{SchI} \& \cite[p.\ 237]{Schm};
also see \cite{BereI}, \cite{BereII}, \cite{Sam} for related results.%
\pagebreak

\section{\textbf{Terminology and Notation}}

\subsection{Representations}

Let $( H , \langle \cdot , \cdot \rangle )$ be a pre-Hilbert space. Denote by
$\mathrm{End}(H)$ the algebra of linear operators mapping $H$ to $H$.
Let $A$ be some \st-algebra. By a representation of $A$ on $H$ we shall
understand an algebra homomorphism $\pi$ from $A$ to $\mathrm{End}(H)$
such that
\[ \langle \pi (a) x, y \rangle = \langle x, \pi (a^*) y \rangle
\qquad \text{for all} \quad a \in A, \quad \text{and all} \quad x, y \in H. \]
The range $\range(\pi) \vcentcolon= \pi(A)$ then is a \st-algebra as is easily seen.
(Please note that $\mathrm{End}(H)$ need not be a \st-algebra for this.)

\subsection{Unitisation and Spectrum}\label{unitspec}%
We require a unit in an algebra to be non-zero.
The notation $e$ is reserved for units.

We denote the unitisation of an algebra $A$ by $\widetilde{A}$.
(If the algebra $A$ is unital, then the unitisation $\widetilde{A}$
is defined to be $A$ itself.)

The spectrum $\s (a) = \s _A (a)$ of an element $a$
of an algebra $A$ is defined as the set of complex numbers $\lambda$
such that $\lambda e - a$ is not invertible in $\widetilde{A}$. The
spectrum in an algebra is the same as in the unitisation.

\subsection{Homomorphisms and Spectra}%
\label{homspec}

Some algebra homomorphism between unital algebras
is called unital if it takes unit to unit.

If $\pi : A \to B$ is an algebra homomorphism, then
\begin{equation}\label{sp}
   \s_B(\pi(a)) \setminus \{0 \} \subset \s_A(a) \setminus \{0 \}
   \quad \text{for all} \quad a \in A,
\end{equation}
\begin{equation}\label{spu}
   \text{if $A$, $B$, $\pi$ are unital, then} \quad
   \s_B(\pi(a)) \subset \s_A(a)  \quad \text{for all} \quad a \in A.
\end{equation}
For a proof, see e.g.\ \cite[Theorem (7.10) p.\ 36]{Thill}.

If $A$ is an algebra, we denote by $\Delta(A)$ the set of multiplicative
linear functionals on $A$. The Gel'fand transform of $a \in A$ is denoted
by $\widehat{a}$.

We equip the spectrum $\Delta(A)$ of an algebra $A$ with the
relative topology induced by the weak* topology.
For an algebra $A$, we have that
\begin{equation}\label{multu}
\widehat{a} \,\bigl( \,\Delta(A) \,\bigr) \subset \s(a)
\quad \text{for all} \quad a \in A,
\end{equation}
cf.\ e.g.\ \cite[Proposition (20.6) p.\ 78]{Thill}, or use
(\ref{sp}) \& (\ref{spu}) above and the fact that
$0 \in \s(a)$ if $A$ is not unital,
as then $A$ is a proper two-sided ideal in $\widetilde{A}$.
That is, ``the range of the Gel'fand transform of $a$
is contained in the spectrum of $a$''.

If $A$ is a \st-algebra, we denote by $\Delta^*(A)$ the
set of Hermitian multiplicative linear functionals on $A$.

\subsection{Hermitian \texorpdfstring{$\ast$}{\9047\073}-Algebras}%
\label{Hermstar}
A \st-algebra is called Hermitian if the spectrum of each of its Hermitian
elements is real.

The above formula (\ref{sp}) also shows that if $\pi$ is a \st-algebra
homomorphism from a Hermitian \st-algebra $A$ to another \st-algebra,
then the range \st-algebra $\pi(A)$ is Hermitian. In particular, if
$\pi$ is a representation of a Hermitian \st-algebra $A$ on a pre-Hilbert
space, then the range $\range(\pi) = \pi(A)$ is a Hermitian \st-algebra.

The above formula (\ref{multu}) implies that a multiplicative linear
functional on a Hermitian \st-algebra $A$ is Hermitian,
$\Delta^* (A) = \Delta (A)$. (Real on Hermitian.)%
\pagebreak

\subsection{Spectral Measures}\label{resid}%

The Borel $\sigma$-algebra is the $\sigma$-algebra
generated by the open sets of a Hausdorff space.
A resolution of identity is a spectral measure $P$ defined on
the Borel $\sigma$-algebra of a Hausdorff space such that the
Borel probability measures $\omega \mapsto \langle P(\omega)x, x \rangle$
are inner regular (and so outer regular) for all unit vectors $x$ in the
Hilbert space on which $P$ acts. (See \cite[Observation (98.5) p.\ 403]{Thill}.)
The support of $P$ then is well-defined, see \cite[Definition (48.38) p.\ 223]{Thill}.
For spectral integrals, cf.\ \cite[\S\ 48 p.\ 213 ff., \S\ 60 p.\ 269 ff.]{Thill}.
Also see \cite[4.3.1 p.\ 74 f., 4.3.2 p.\ 75 ff.]{SchU}.
We also write $P$ for the range of $P$.

For images of spectral measures, see \cite[Definition (48.28) p.\ 220]{Thill}.

\subsection{Commutativity for Unbounded Operators}%
\label{CommUnb}

See \cite[sections 5.5 - 5.6 p.\ 101 ff.]{SchU}.
A possibly unbounded operator $N$ in a Hilbert space is called
normal if it is densely defined, closed and $N^*N = NN^*$,
cf.\ e.g.\ \cite[13.29 p.\ 368]{Rudin}.  It then is maximally normal,
cf.\ e.g.\ \cite[13.32 p.\ 370]{Rudin}.

Let $N$ be a (possibly unbounded) normal operator in a Hilbert space
$\neq \{ 0 \}$. A bounded operator $T$ is said to commute with $N$ if and
only if the domain of $N$ is invariant under $T$, and if $TNx = NTx$ for all
$x$ in the domain of $N$. This is equivalent to $TN \subset NT$. The Spectral
Theorem says that this is the case if and only if $T$ commutes with the
spectral resolution of $N$. See for example \cite[Theorem 13.33 p.\ 371]{Rudin}.

A bounded operator $T$ commutes with a (possibly unbounded) self-adjoint
operator $A$ in a Hilbert space $\neq \{ 0 \}$ if and only if $T$ commutes
with the Cayley transform of $A$, cf. \cite[Lemma (61.15) p.\ 282]{Thill}.

One says that two (possibly unbounded) normal operators in a
Hilbert space $\neq \{ 0 \}$ commute spectrally (or strongly),
if their spectral resolutions commute, cf.\ \cite[Proposition 5.27 p.\ 107]{SchU}.
For two self-adjoint operators this is the case
if and only if their Cayley transforms commute.

A (possibly unbounded) operator $N$ in a Hilbert space $\neq \{ 0 \}$ is
normal if and only if $N = R + \iu S$ where $R$ and $S$ are self-adjoint
and commute spectrally. (See \cite[Proposition 5.30 p.\ 108 f.]{SchU}.)

\section{\textbf{Reminders and Preliminaries}}

\subsection{The Cayley Transformation in Algebras}%
\label{Cayleysubsect}

We next give a few remarks on Cayley transforms in algebras.
(See also \cite[\S\ 15 p.\ 62 f.]{Thill}.)
(The theory of Cayley transforms of densely defined symmetric
operators in a Hilbert space is supposed to be known.
We refer the reader to \cite[7.4A pp.\ 520 - 528]{AMR},
\cite[Chapter 8 pp.\ 229 - 247]{Weid}, \cite[sections VI.13.1-2 pp.\ 283 - 290]{SchU}.)

First, note that an element $a$ of an algebra $A$ commutes with
an invertible element $b$ of $A$ if and only if $a$ commutes with
$b^{\,-1}$.

Next, the Rational Spectral Mapping Theorem says that if $r$
is a non-constant rational functional without pole on the spectrum
of an element $a$ of an algebra, then
\[ \s(r(a)) = r(\s(a)). \]
See for example \cite[Theorem (7.9) p.\ 35]{Thill}.

Let $a$ be an element of an algebra $A$ such that $\s(a)$
does not contain $-i$. Then
\[ g = (a - \iu e ) {^{\,}(a + \iu e)}^{\,-1} \in \widetilde{A} \]
is called the Cayley transform of $a$. The Rational Spectral Mapping
Theorem implies that $1$ is never in the spectrum of a Cayley transform.
(Because the corresponding Moebius transformation does not assume
the value $1$.) In particular $e-g$ is invertible in $\widetilde{A}$.
A simple computation then shows that $a$ can be
regained from $g$ as the inverse Cayley transform
\[ a = \iu \,{(e-g)}^{\,-1\,} (e+g). \]

%
If $A$ is a \st-algebra, and if $a$ is Hermitian, then $g$ is unitary,
i.e.\ $g^*g = gg^* =e$. This follows from the above commutativity
property.

If $A$ is a Hermitian \st-algebra, then the Cayley transformation of a
Hermitian element of $A$ is well-defined. 

\subsection{The Archetypal Spectral Theorem}%
\label{SNAG}%

\begin{reminder}[spectral theorem]\label{specthm}%
Let $C$ be a commutative C*-algebra of bounded linear operators
on a Hilbert space $H \neq \{ 0 \}$ containing the unit operator.
There then exists a unique resolution
of identity $R$ on the spectrum $\Delta(C) = \Delta^*(C)$ with
\[ c = \int \widehat{c} \:\diff R \quad \text{for all} \quad c \in C. \]
One says that $R$ is the spectral resolution of $C$.
A bounded operator on $H$ commutes with the operators
$c \in C$ if and only if it commutes with $R$.
The support of $R$ is all of $\Delta^*(C)$.
(For a proof, see e.g.\ \cite[Theorem 12.22 p.\ 321 f.]{Rudin}
or \cite[Theorem (49.1) p.\ 224]{Thill}.)
\end{reminder}

\section{\textbf{A Sufficient Criterion for Essential Normality}}

If $a$ is a Hermitian element of a \st-algebra $A$ and if either $\iu$ or
$-\iu$ does not belong to $\s(a)$ then neither of them belongs to
$\s(a)$. (Because for an arbitrary element $b$ of $A$, one has
$\s (b^*) = \overline{\s (b)}$ as is easily seen.)%

\begin{proposition}\label{prop}%
   Let $\pi$ be a representation of a \st-algebra $A$ on some pre-Hilbert
   space $H$. Assume that $a$ is a Hermitian element of $A$ whose
   spectrum does not contain either $\iu$ or $-\iu$. Then $\pi (a)$ is essentially
   self-adjoint.
\end{proposition}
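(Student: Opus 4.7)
The plan is to reduce essential self-adjointness to the surjectivity of $\pi(a) \pm \iu I$ on $H$, which will follow by transporting the algebraic invertibility of $a \pm \iu e$ through $\pi$. Since $a^* = a$, the operator $\pi(a)$ is symmetric on $H$; viewed in the completion $\overline{H}$, it is densely defined and symmetric. By the standard characterisation (e.g.\ via the Cayley transform of symmetric operators recalled in section \ref{CommUnb}), $\pi(a)$ is essentially self-adjoint in $\overline{H}$ if and only if the ranges of $\pi(a) + \iu I$ and $\pi(a) - \iu I$ are both dense in $\overline{H}$.

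Next I would pass to the unitisation. Extend $\pi$ to a unital representation $\widetilde{\pi} : \widetilde{A} \to \mathrm{End}(H)$ (with $\widetilde{\pi}(e) := I$; if $A$ already has a unit, the hypothesis $\pm\iu \notin \mathrm{sp}(a)$ refers to the same spectrum). Using the remark from the paragraph preceding the proposition, since $\iu$ or $-\iu$ fails to lie in $\mathrm{sp}(a)$, neither does, so both $a + \iu e$ and $a - \iu e$ are invertible in $\widetilde{A}$. Denote their inverses by $b_{\pm} \in \widetilde{A}$. Applying $\widetilde{\pi}$ to the identities $(a \pm \iu e)\,b_{\pm} = b_{\pm}(a \pm \iu e) = e$ gives
\[ \bigl(\pi(a) \pm \iu I\bigr)\,\widetilde{\pi}(b_{\pm}) \;=\; \widetilde{\pi}(b_{\pm})\bigl(\pi(a) \pm \iu I\bigr) \;=\; I \]
on $H$. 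Consequently $\pi(a) \pm \iu I$ map $H$ onto $H$, and a fortiori their ranges are dense in $\overline{H}$. This yields essential self-adjointness.

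The only delicate point is the transition between algebraic invertibility in $\widetilde{A}$ and the operator-theoretic surjectivity required by the Hilbert-space criterion, but this is immediate once one has $\widetilde{\pi}$ available and records that $\widetilde{\pi}(b_{\pm})$ (however unbounded) serves as a two-sided algebraic inverse of $\pi(a) \pm \iu I$ on the whole pre-Hilbert space $H$. Equivalently, one may observe that the Cayley transform $g = (a - \iu e)(a + \iu e)^{-1}$ from section \ref{Cayleysubsect} is unitary in $\widetilde{A}$, so $\widetilde{\pi}(g)$ is an isometric surjection of $H$ onto $H$ whose extension to $\overline{H}$ is unitary; this extension is readily identified with the operator-theoretic Cayley transform of the symmetric operator $\pi(a)$, and unitarity of the Cayley transform characterises essential self-adjointness. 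Either route closes the argument.
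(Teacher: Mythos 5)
Your argument is correct and is essentially the paper's own proof: the paper likewise observes that $\pi(a)$ is symmetric, uses the spectral inclusion (formula (\ref{sp})) to conclude that $\pm\iu$ are not in the spectrum of $\pi(a)$ in $\mathrm{End}(H)$, hence that $\pi(a)\pm\iu\mathds{1}$ are invertible in $\mathrm{End}(H)$ and thus surjective onto $H$, which gives essential self-adjointness. Your phrasing via pushing the inverses $b_{\pm}$ through $\widetilde{\pi}$ is just a more explicit rendering of that same spectral-inclusion step, so the two proofs coincide in substance.
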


\begin{proof}
The operator $\pi (a)$ is symmetric in $H$. It follows from the preceding remark
and formula (\ref{sp}) of subsection \ref{homspec} that the spectrum of $\pi (a)$ in
$\mathrm{End}(H)$ does not contain $\iu$ nor $-\iu$. Hence $\pi (a) \pm \iu \mathds{1}$
are invertible in $\mathrm{End}(H)$, and thus surjective onto $H$. This implies
that $\pi (a)$ is essentially self-adjoint. See \cite[Proposition 7.4.13 p.\ 522]{AMR}.%
\end{proof}

\begin{proposition}\label{spmeas}
   Let $\pi$ be a representation of \st-algebra $A$ on a pre-Hilbert space
   $\neq \{ 0 \}$. Let $a,b$ be commuting Hermitian elements of $A$ each of
   whose spectrum does not contain either $\iu$ or $-\iu$. The self-adjoint closures
   of $\pi (a)$ and $\pi (b)$ then commute spectrally.
\end{proposition}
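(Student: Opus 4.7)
The plan is to reduce the spectral commutativity of $\overline{\pi(a)}$ and $\overline{\pi(b)}$ to the commutativity of their operator-theoretic Cayley transforms on the completion $\overline{H}$ of $H$, and then to identify those Cayley transforms with the continuous extensions of the images under $\pi$ of the algebraic Cayley transforms in $\widetilde{A}$, which will commute because $a$ and $b$ do.

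By the remark preceding Proposition \ref{prop}, neither $\iu$ nor $-\iu$ lies in $\mathrm{sp}(a) \cup \mathrm{sp}(b)$, so by subsection \ref{Cayleysubsect} the algebraic Cayley transforms $g_a = (a - \iu e)(a + \iu e)^{-1}$ and $g_b = (b - \iu e)(b + \iu e)^{-1}$ are well-defined unitary elements of $\widetilde{A}$. Since $a$ and $b$ commute, $a$ commutes with $b \pm \iu e$ and hence with $(b + \iu e)^{-1}$, so $g_a$ and $g_b$ commute in $\widetilde{A}$. Extending $\pi$ canonically to a unital $*$-homomorphism $\widetilde{\pi} : \widetilde{A} \to \mathrm{End}(H)$, the images $u := \widetilde{\pi}(g_a)$ and $v := \widetilde{\pi}(g_b)$ commute in $\mathrm{End}(H)$; moreover each is a bijective linear isometry of $H$, since for any unitary $g \in \widetilde{A}$ the defining property of a representation gives $\|\pi(g)x\|^2 = \langle x, \pi(g^*g)x \rangle = \|x\|^2$. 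Consequently $u$ and $v$ extend uniquely to commuting unitary operators $U_a, U_b$ on $\overline{H}$.

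Next I will identify $U_a$ with the operator-theoretic Cayley transform of the self-adjoint operator $\overline{\pi(a)}$ (and similarly for $b$). From the proof of Proposition \ref{prop}, $\pi(a) \pm \iu \mathds{1}$ are bijections of $H$, and applying $\pi$ to the identity defining $g_a$ gives $u = (\pi(a) - \iu \mathds{1})(\pi(a) + \iu \mathds{1})^{-1}$ on $H$. On the other hand, $\overline{\pi(a)} + \iu \mathds{1}$ is a bijection from its domain onto $\overline{H}$ extending $\pi(a) + \iu \mathds{1}$, so its bounded inverse on $\overline{H}$ restricts on $H$ to $(\pi(a) + \iu \mathds{1})^{-1}$. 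Hence the operator Cayley transform $(\overline{\pi(a)} - \iu \mathds{1})(\overline{\pi(a)} + \iu \mathds{1})^{-1}$ agrees with $u$ on the dense subspace $H$ and, both being bounded, coincides with $U_a$ on all of $\overline{H}$.

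Since $U_a$ and $U_b$ commute, the characterisation in subsection \ref{CommUnb} of spectral commutativity of two self-adjoint operators via commutativity of their Cayley transforms yields the conclusion. The main technical point, and the only non-routine one, is the identification in the previous paragraph: namely that the two natural candidates for ``the Cayley transform of $\overline{\pi(a)}$''---forming $g_a \in \widetilde{A}$ algebraically and then passing through $\pi$ and to the completion, versus taking the operator-theoretic Cayley transform of the self-adjoint operator $\overline{\pi(a)}$---produce the same bounded unitary on $\overline{H}$.
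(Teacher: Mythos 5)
Your argument is correct and is precisely the paper's one-line proof written out in full: the Cayley transforms of $\pi(a)$ and $\pi(b)$ are commuting bijective isometries of the pre-Hilbert space, they extend to commuting unitaries on the completion which are the operator-theoretic Cayley transforms of the self-adjoint closures $\overline{\vphantom{B}\pi(a)}$ and $\overline{\pi(b)}$, and spectral commutativity of two self-adjoint operators is equivalent to commutativity of their Cayley transforms, as recalled in subsection \ref{CommUnb}. One small caveat: your detour through $\widetilde{A}$ tacitly assumes that $\pi$ extends to a \emph{unital} homomorphism on $\widetilde{A}$, which is not available in the edge case where $A$ is already unital but $\pi(e) \neq \mathds{1}$; this is sidestepped by forming the Cayley transforms directly in $\mathrm{End}(H)$ as $(\pi(a) - \iu \mathds{1})\,{(\pi(a) + \iu \mathds{1})}^{\,-1}$ (legitimate since the proof of Proposition \ref{prop} shows $\pi(a) \pm \iu \mathds{1}$ are invertible there), and these commute with the corresponding operators for $b$ simply because $\pi(a)$ and $\pi(b)$ commute.
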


\begin{proof}
This is so because in this case the Cayley transforms of $\pi (a)$ and
$\pi (b)$ are commuting bijective isometries of the pre-Hilbert space.
\end{proof}

\begin{lemma}
   Let $A, B$ be two essentially self-adjoint operators in a Hilbert space
   $\neq \{ 0 \}$, whose closures $\overline{A}, \overline{B}$ commute
   spectrally. Then $A + \iu B$ is essentially normal and its closure is
   $\overline{A} + \iu \overline{B}$.%
\end{lemma}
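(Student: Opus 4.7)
The plan is to exhibit a concrete normal operator that both extends $A + \iu B$ and is contained in its closure. Since $\overline{A}, \overline{B}$ commute spectrally, their spectral resolutions combine into a joint spectral measure $P$ on $\mathds{R}^2$. Define
\[
N := \int_{\mathds{R}^2} (x + \iu y) \, dP(x,y),
\]
a normal operator with domain $\{v : \int (x^2 + y^2)\, d\langle Pv,v\rangle < \infty\}$, which coincides with $\mathrm{dom}(\overline{A}) \cap \mathrm{dom}(\overline{B})$; on this domain one verifies $Nv = \overline{A}v + \iu \overline{B}v$ by splitting the integrand and applying the functional calculus.

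From $A \subset \overline{A}$ and $B \subset \overline{B}$ it follows at once that $A + \iu B \subset N$. Since $N$ is closed, $A + \iu B$ is closable with $\overline{A + \iu B} \subset N$. The real content of the lemma is the reverse inclusion $N \subset \overline{A + \iu B}$, i.e.\ that $\mathrm{dom}(A) \cap \mathrm{dom}(B)$ is a core for $N$. Here I would proceed in two stages: first, given $v \in \mathrm{dom}(N)$, use the spectral cutoffs $P_n := P(\{x^2 + y^2 \le n\})$ to produce approximants $P_n v$ lying in every joint spectral subspace and converging to $v$ in the graph norm of $N$; second, for each fixed $n$, approximate $P_n v$ by elements of $\mathrm{dom}(A) \cap \mathrm{dom}(B)$, exploiting the essential self-adjointness of $A$ and $B$ (so that $\mathrm{dom}(A)$ is a core for $\overline{A}$ and $\mathrm{dom}(B)$ a core for $\overline{B}$).

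The main obstacle will be the simultaneous approximation in the second stage: one must produce a single sequence $w_k \in \mathrm{dom}(A) \cap \mathrm{dom}(B)$ with $w_k \to P_n v$ and, simultaneously, $A w_k \to \overline{A} P_n v$ and $B w_k \to \overline{B} P_n v$. Separate cores for $\overline{A}$ and $\overline{B}$ do not yield this automatically, so the argument must lean on spectral commutativity — for instance, by combining an approximation from $\mathrm{dom}(A)$ with the bounded resolvents or spectral cutoffs of $\overline{B}$ (which commute with $\overline{A}$) in order to coerce the approximants into $\mathrm{dom}(B)$ without disturbing convergence under $\overline{A}$. Once the core property is secured we obtain $\overline{A + \iu B} = N$; since $N$ is normal by construction, this shows $A + \iu B$ is essentially normal with closure $\overline{A} + \iu \overline{B}$, as required.
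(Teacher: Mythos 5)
Your architecture coincides with the paper's: both identify $N := \overline{A} + \iu\,\overline{B}$ as a normal operator (the paper via the Dunford--Schwartz characterisation of normal operators as $R+\iu S$ with $R,S$ self-adjoint and spectrally commuting, you via the joint spectral measure --- the same thing), observe $A+\iu B\subset N$, hence $\overline{A+\iu B}\subset N$, and reduce everything to the reverse inclusion, i.e.\ to $\mathrm{dom}(A)\cap\mathrm{dom}(B)$ being a core for $N$. The paper dismisses this step as ``easily seen''; you, more candidly, isolate it as ``the real content'' and flag the simultaneous-approximation problem --- but you do not resolve it, so what you have is a correct skeleton with the decisive step missing.

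The gap is genuine and, from the stated hypotheses alone, cannot be filled. The obstruction you name is exactly the right one, and your proposed escape does not work: spectral cutoffs and resolvents of $\overline{B}$ move vectors into $\mathrm{dom}(\overline{B})$, not into the original domain $\mathrm{dom}(B)$, so they cannot coerce approximants back into $\mathrm{dom}(A)\cap\mathrm{dom}(B)$. Indeed, one can choose two cores of the \emph{same} self-adjoint operator with intersection $\{0\}$ (multiplication by $x$ on $L^{2}(\mathds{R})$ restricted to $C_{c}^{\infty}$ and to the Paley--Wiener class), so $A+\iu B$ need not even be densely defined; and even with a common domain $D$ one can pass to the kernel of a linear functional that is bounded in the graph norm of $N$ but unbounded in the graph norms of $\overline{A}$ and of $\overline{B}$ separately, obtaining a $D$ that is a core for each of $\overline{A},\overline{B}$ but not for $N$ --- then $\overline{A+\iu B}$ is a proper closed restriction of $N$ and, by maximal normality, not normal. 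What saves the lemma where it is applied (Theorem \ref{theo}) is the extra structure present there: $\mathrm{dom}(A)=\mathrm{dom}(B)=H$ is invariant under both operators and $A\pm\iu$, $B\pm\iu$ map $H$ onto $H$. With that, for $w_{n}\in H$, $w_{n}\to w$, the vectors $v_{n}:=(\overline{A}+\iu)^{-1}(\overline{B}+\iu)^{-1}w_{n}$ stay in $H$ and converge in the graph norm of $N$ to $(\overline{A}+\iu)^{-1}(\overline{B}+\iu)^{-1}w$, and such limits form a core for $N$; this closes your second stage. Without a hypothesis of this kind neither your argument nor the paper's one-line claim goes through, so you should either add it or restrict the lemma to the situation of Theorem \ref{theo}.
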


\begin{proof}
As noted in subsection \ref{CommUnb}, an operator $N$ is normal if and
only if $N = R + \iu S$ where $R$ and $S$ are self-adjoint and commute
spectrally. This implies that $\overline{A} + \iu \overline{B}$ is normal. In
particular $\overline{A} + \iu \overline{B}$ is closed. It follows that
$A + \iu B$ is closable and its closure is easily seen to extend the closed
operator $\overline{A} + \iu \overline{B} \supset A + \iu B$.
Therefore $\overline{A + \iu B} = \overline{A} + \iu \overline{B}$.
\end{proof}

\begin{theorem}\label{theo}
   Let $\pi$ be a representation of a \st-algebra $A$ on a pre-Hilbert space
   $\neq \{ 0 \}$. Let $a, b$ be commuting Hermitian elements of $A$ each
   of whose spectrum does not contain either $\iu$ or $-\iu$, and put
   $c \vcentcolon= a + \iu b$. 
   The operators $\pi (a)$ and $\pi (b)$ are essentially self-adjoint and their
   closures commute spectrally. The operator $\pi (c)$ is essentially normal,
   and one has
\[ \overline{\pi (c)} = \overline{\vphantom{B}\pi (a)} + \iu \overline{\pi (b)}. \]
\end{theorem}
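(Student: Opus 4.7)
The plan is to read off the conclusion from the three preceding results, which together already do all the work. First, I would apply Proposition \ref{prop} twice, once to $a$ and once to $b$: since each is Hermitian in $A$ and its spectrum misses both $\iu$ and $-\iu$, each of $\pi(a)$ and $\pi(b)$ is essentially self-adjoint in the completion of $H$. This gives the first assertion.

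Next, since $a$ and $b$ commute in $A$, satisfy the same spectral hypothesis, and $\pi$ acts in a pre-Hilbert space $\neq \{0\}$, Proposition \ref{spmeas} applies directly and yields that the self-adjoint closures $\overline{\pi(a)}$ and $\overline{\pi(b)}$ commute spectrally. So the second assertion is immediate.

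For the last two assertions, I would use the Lemma just preceding the theorem with the choice $A = \pi(a)$ and $B = \pi(b)$. The hypotheses of that Lemma are exactly what we have just established. Before invoking it, one should note that because $A$ is a complex $*$-algebra and $\pi$ is an algebra homomorphism, the identity $\pi(c) = \pi(a+\iu b) = \pi(a) + \iu\,\pi(b)$ holds on all of $H$, so the operator $A + \iu B$ appearing in the Lemma literally is $\pi(c)$. The Lemma then delivers essential normality of $\pi(c)$ and the closure formula
\[ \overline{\pi(c)} \;=\; \overline{\pi(a)} + \iu \,\overline{\pi(b)}. \]

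I do not anticipate any real obstacle: the theorem is a bookkeeping statement that packages Proposition \ref{prop}, Proposition \ref{spmeas}, and the preceding Lemma into a single assertion. The only point requiring a line of care is the observation that $\pi(c) = \pi(a) + \iu\,\pi(b)$ as operators on $H$, which is what allows the abstract Lemma to be applied to the concrete operator $\pi(c)$.
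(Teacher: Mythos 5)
Your proposal is correct and is exactly the paper's argument: the paper's entire proof reads ``This follows from the preceding three items,'' i.e.\ Proposition \ref{prop}, Proposition \ref{spmeas}, and the Lemma, applied just as you do. Your extra remark that $\pi(c)=\pi(a)+\iu\,\pi(b)$ by linearity of $\pi$ is the only detail the paper leaves implicit.
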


\begin{proof}
This follows from the preceding three items.
\end{proof}

\begin{corollary}\label{essnormal}
Let $\pi$ be a representation of some commutative Hermitian
\st-algebra $A$ on a pre-Hilbert space $\neq \{ 0 \}$. Then for each element
$c$ of $A$ the operator $\pi (c)$ is essentially normal. If $c = a + \iu b$ with
$a, b$ Hermitian elements of $A$, then the operators $\pi (a)$ and
$\pi (b)$ are essentially self-adjoint, their closures commute spectrally,
and one has
\[ \overline{\pi (c)} = \overline{\vphantom{B}\pi (a)} + \iu \overline{\pi (b)}. \]
\end{corollary}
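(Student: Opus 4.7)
The plan is to reduce the corollary directly to Theorem \ref{theo} by checking that its hypotheses are satisfied. Essentially every piece of work has already been done; what remains is to observe that the Hermitian and commutative structure of $A$ supplies the ingredients Theorem \ref{theo} needs.

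First I would fix $c \in A$ and decompose it into its Hermitian and skew-Hermitian parts: set $a := (c+c^*)/2$ and $b := (c-c^*)/(2\iu)$, so that $a$ and $b$ are Hermitian elements of $A$ with $c = a + \iu b$. (If the user supplies an a priori decomposition $c = a + \iu b$, the same argument works verbatim with those $a,b$.) Since $A$ is commutative, $a$ and $b$ commute. Since $A$ is Hermitian, the spectra $\mathrm{sp}(a)$ and $\mathrm{sp}(b)$ are contained in $\mathds{R}$, and in particular neither contains $\iu$ nor $-\iu$.

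Now I would simply invoke Theorem \ref{theo} with these $a,b$ (and $c$). It yields at once that $\pi(a)$ and $\pi(b)$ are essentially self-adjoint, that $\overline{\pi(a)}$ and $\overline{\pi(b)}$ commute spectrally, that $\pi(c) = \pi(a) + \iu \pi(b)$ is essentially normal, and that
\[ \overline{\pi(c)} = \overline{\vphantom{B}\pi(a)} + \iu\,\overline{\pi(b)}. \]
Essential normality of $\pi(c)$ for an arbitrary $c \in A$ follows, since every $c$ admits such a decomposition.

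There is no real obstacle here: the whole content of the corollary is already packaged in Theorem \ref{theo}, and the only verifications required are (i) that Hermitian elements of a Hermitian $*$-algebra have real spectrum (which is the definition), and (ii) that any element of a $*$-algebra decomposes into a sum $a + \iu b$ with $a, b$ Hermitian (a one-line identity). The corollary is therefore essentially a translation of Theorem \ref{theo} from its local hypothesis about individual elements to the global setting in which \emph{every} Hermitian element automatically satisfies the spectral condition.
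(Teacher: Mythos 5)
Your proposal is correct and matches the paper's (implicit) argument exactly: the corollary is stated without a separate proof precisely because it follows from Theorem \ref{theo} once one notes that any $c$ decomposes as $a+\iu b$ with $a,b$ Hermitian, that commutativity of $A$ gives $ab=ba$, and that Hermiticity of $A$ puts $\mathrm{sp}(a)$ and $\mathrm{sp}(b)$ in $\mathds{R}$, so $\pm\iu$ are excluded. Nothing is missing.
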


Things like these have been noted before,
cf.\ \cite[Corollary 8.1.20 p.\ 210 \& Corollary 9.1.4 p.\ 237]{Schm}
or \cite[Corollary 4.12 p.\ 67 \& Theorem 7.11 p.\ 143]{SchI}.
I came to them through a good question.%

\section{\textbf{Proof of the Main Result}}%
\label{mainsect}

Let $\pi(A)$ be a Hermitian commutative unital \st-algebra of
linear operators on a pre-Hilbert space $H \neq \{ 0 \}$, such that
$\slangle \pi(a)^* x, y \srangle = \slangle x, \pi(a) y \srangle$
holds for all $\pi(a) \in \pi(A)$ and $x, y \in H$.
Then $\Delta\bigl(\pi(A)\bigr) = \Delta^*\bigl(\pi(A)\bigr)$.
Cf.\ \ref{Hermstar}.

Now let $\pi(a) \in \pi(A)$. The linear operator $\pi(a)$ is essentially normal.
To see this, apply corollary \ref{essnormal} to the identical representation of $\pi(A)$.
We shall identify $\pi(a)$ with its normal closure $\overline{\pi(a)}$.
The operators $\overline{\pi(a)}$ then form a \st-algebra.
Let $\pi(A)\sa$ denote the set of Hermitian elements of $\pi(A)$.

Let $\kappa : a \to (a - \iu e ) {^{\,}(a + \iu e)}^{\,-1}$ denote the Cayley transformation
in the Hermitian \st-algebra $\pi(A)$, or in the complex plane, cf.\ subsection \ref{Cayleysubsect}.%

The unitary Cayley transforms $\kappa\bigl(\overline{\pi(a)}\bigr) \in \pi(A)$ of the self-adjoint operators
$\overline{\pi(a)} \in \pi(A)\sa$ generate a unital commutative \st-algebra of bounded operators 
$B$ that is $\subset \pi(A)$, as $\pi(A)$ is Hermitian.
Let $R$ be the spectral resolution of the completion $C$ of $B$,
cf.\ reminder \ref{specthm} in subsection \ref{SNAG}.

We shall map $\Delta^*(C)$ to $\Delta^*\bigl(\pi(A)\bigr)$. Let $\tau \in \Delta^*(C)$. Then
$\tau |_B \in \Delta^*(B)$ as $B$ is unital in $C$. We next use that $B \subset \pi(A)$, as noted above.

\begin{statement}\label{extends}%
The function $\tau |_B \in \Delta^*(B)$ extends uniquely to $\sigma \in \Delta^*\bigl(\pi(A)\bigr)$.
\end{statement}

\begin{proof}
The set
\[ S \vcentcolon= \{ \,c \in B : c \text{ is invertible in } \pi(A) \,\} \]
is a unital \st-subsemigroup of $B$. Hence the set of fractions
\[ F : = \{ \,c^{\,-1\,} b \in \pi(A) : c \in S, \ b \in B \,\} \]
forms a unital \st-subalgebra of $\pi(A)$.

We shall prove that $F$ is all of $\pi(A)$.
Since $F$ is a complex vector space,
it suffices to prove that $F$ contains every element of $\pi(A)\sa$.
Let $\pi(a) \in \pi(A)\sa$, and let $g \in B$ be its Cayley transform.
Cf.\ subsection \ref{Cayleysubsect}. Then $\pi(a)$ is the inverse Cayley transform of $g$:
\[ \pi(a) = \mathrm{i} \,{(e-g)}^{\,-1} (e+g). \]
This says that $\pi(a)$ is of the form ${c}^{\,-1\,} b$ where $b, c \in B$
with $c$ invertible in $\pi(A)$. That is, $\pi(a) \in F$. Thus $F$ contains
every element of $\pi(A)\sa$, as was to be shown.

For $\pi(a) = c^{\,-1\,} b \in \pi(A)$ with $c \in S$, $b \in B$,
put $\sigma\bigl(\pi(a)\bigr) \vcentcolon= {\tau(c)}^{\,-1} \,\tau(b)$.
Please note here that $\tau(c) \in \s_{\pi(A)}(c)$ is never zero for
$c \in S \subset B \subset \pi(A)$,
cf.\ formula (\ref{multu}) of subsection \ref{homspec}.
Please note that this definition is independent of the
representatives $c \in S$, $b \in B$ of $\pi(a) \in \pi(A)$.
It is easy to verify that this definition extends $\tau|_B$ uniquely to a
Hermitian multiplicative linear functional $\sigma$ on all of $\pi(A)$.
\end{proof}

\begin{statement}\label{function}%
For $\pi(a) \in \pi(A)\sa$ and $\tau \in \Delta^*(C)$ we have
\[ \wht{\kappa\bigl(\overline{\pi(a)}\bigr)}(\tau) = \kappa\bigl( \wht{\pi(a)}(\sigma) \bigr). \]
\end{statement}

\begin{proof}
Indeed, by the homomorphic nature of $\sigma \in \Delta^*\bigl(\pi(A)\bigr)$, we then find
\[ \tau \Bigl( \kappa \bigl( \overline{\pi(a)} \bigr) \Bigr)
= \sigma \Bigl( \kappa \bigl( \overline{\pi(a)} \bigr) \Bigr)
= \kappa \Bigl( \sigma \bigl( \pi(a) \bigr) \Bigr). \qedhere \]
\end{proof}

A spectral measure $Q$ on a subset of $\Delta^*\bigl(\pi(A)\bigr)$ is defined as the
image measure $f(R)$ of $R$ under the above injective map
$f : \Delta^*(C) \to \Delta^*\bigl(\pi(A)\bigr)$, see section \ref{resid}.

\begin{statement}\label{statp1}%
The spectral measure $Q$ satisfies
\[ \overline{\pi(a)} = \int \wht{\pi(a)} \,\diff Q \quad \text{for all} \quad \pi(a) \in \pi(A). \]
\end{statement}

\begin{proof}
For $\pi(a) \in \pi(A)\sa$, we compute
\begin{align*}
\kappa\bigl(\overline{\pi(a)}\bigr) & = \int \wht{\kappa\bigl(\overline{\pi(a)}\bigr)} \,\diff R \\
& = \int \kappa\bigl( \wht{\pi(a)} \bigr) \,\diff Q \tag*{see statement \ref{function}} \\
& = \int \kappa \,\diff \mspace{2mu} \wht{\pi(a)} (Q) \tag*{see \cite[(60.10) p.\ 272]{Thill}} \\
& = \kappa\mspace{2mu}\bigl( \int \id \,\diff \mspace{2mu} \wht{\pi(a)} (Q) \bigr)
\tag*{see \cite[(61.18) p.\ 283]{Thill}} \\
& = \kappa\mspace{2mu}\bigl( \int \wht{\pi(a)} \,\diff Q \bigr) \tag*{see again \cite[(60.10) p.\ 272] {Thill}}
\intertext{or, by injectivity of the Cayley transformation}
\overline{\pi(a)} & = \int \wht{\pi(a)} \,\diff Q.
\end{align*}
Let now $\pi(c) \in \pi(A)$ be arbitrary, and let $\pi(a), \pi(b)$ be Hermitian
elements of $\pi(A)$ with $\pi(c) = \pi(a) + \iu \pi(b)$. Corollary \ref{essnormal}
(applied to the identical representation of $\pi(A)$)
says that $\overline{\pi(c)}$ is normal and that
\begin{align*}
\overline{\pi(c)} = \overline{\pi(b)} + \iu \overline{\pi(a)}
 & = \int \widehat{\pi(a)} \,\diff Q
 + \iu \int \widehat{\pi(b)} \,\diff Q \\
 & \subset \int \widehat{\pi(c)} \,\diff Q,
\end{align*}
cf.\ \cite[Theorem (60.14) p.\ 273]{Thill}.
Since the member on the right hand side is normal,
cf.\ \cite[Theorem (60.23) p.\ 275]{Thill},
we get by maximal normality of $\overline{\pi(c)}$
(subsection \ref{CommUnb}) that
\[ \overline{\pi(c)} = \int \widehat{\pi(c)} \,\diff Q. \qedhere \]
\end{proof}

Let now $\pi$ be a unital representation of a unital \st-algebra $A$ on a pre-Hilbert space
$H \neq \{ 0 \}$. Assume that its image $\pi(A)$ is commutative and Hermitian,
e.g.\ $A$ commutative and Hermitian.
Please note that then $\Delta^*\bigl(\pi(A)\bigr) = \Delta\bigl(\pi(A)\bigr)$.
Continue with the notation as above.

Consider the map $\pi^*$ adjoint to $\pi$, given by
\begin{align*}
\pi^* : \Delta^*\bigl(\pi(A)\bigr) & \to \Delta^*(A) \\
                           \sigma & \mapsto \pi^*(\sigma) = \sigma \circ \pi.
\end{align*}
This is well-defined as the \st-algebras are unital.
So a spectral measure $P$ on a subset of $\Delta^*(A)$ is defined as the image
$\pi^*(Q)$ of $Q$ under the injective map $\pi^*$.%

\begin{statement}
The spectral measure $P$ satisfies
\[ \overline{\pi(a)} = \int \wht{a} \,\diff P \quad \text{for all} \quad a \in A. \]
\end{statement}

\begin{proof}
We compute
\begin{align*}
\overline{\pi(a)} & = \int \wht{\pi(a)} \,\diff Q \\
 & = \int \sigma\bigl(\pi(a)\bigr) \,\diff Q(\sigma) \\
 & = \int (\sigma \circ \pi) (a) \,\diff Q(\sigma) \\
 & = \int \wht{a}\mspace{2mu}(\sigma \circ \pi) \,\diff Q(\sigma) \\
 & = \int \wht{a} \bigl( \pi^*(\sigma) \bigr) \,\diff Q(\sigma) \\
 & = \int (\wht{a} \circ \pi^*) (\sigma) \,\diff Q(\sigma) \\
 & = \int (\wht{a} \circ \pi^*) \,\diff Q \\
 & = \int \wht{a} \,\diff \pi^*(Q) = \int \wht{a} \,\diff P \pagebreak \qedhere
\end{align*}
\end{proof}

Statements (ii) and (iii) of the main result, theorem \ref{main}, follow as in
\cite[Proposition (49.11) p.\ 229]{Thill}. In order to prove statement (iv),
we note that if a bounded linear operator $b$ on the completion of $H$
commutes with $P$, then it commutes with all operators $\overline{\pi(a)}$
$(a \in A)$, by statement (i). To prove the converse, we note that if a
bounded linear operator on the completion of $H$ commutes with all
operators $\overline{\pi(a)}$ $(a \in A)$, then it commutes with all operators
in the commutative C*-algebra $C$, by subsection \ref{CommUnb},
and so with its spectral resolution $R$, and then with its image $Q$,
and thus with the image $P$ thereof.
This finishes the proof of the main result, theorem \ref{main}.


\section{\textbf{Appendix: Derivation of Some Known Consequences}}

\begin{reminder}[the GNS construction]\label{GNS}%
Let $A$ be a unital \st-algebra, and let $\varphi$ be a
positive linear functional on $A$. (That is, $\varphi (a^*a) \geq 0$
for all $a \in A$.) Assume that $\varphi$ is non-zero. One defines
a positive semidefinite sesquilinear form on $A$ by putting
\[ \langle a, b \rangle = \varphi (b^*a) \qquad (a, b \in A). \]
This sesquilinear form is a so-called Hilbert form, in that it satisfies
\[ \langle ab, c \rangle = \langle b, a^*c \rangle
\quad \text{for all} \quad a, b, c \in A. \]
It follows that the isotropic subspace
\[ I \vcentcolon= \{ \,a \in A : \langle a, b \rangle = 0 \text{ for all } b \in B \,\} \]
is a left ideal in $A$. The quotient space $H \vcentcolon= A / I$ is a
pre-Hilbert space when equipped with the inner product
\[ \langle a + I , b + I \rangle = \langle a, b \rangle \qquad (a, b \in A). \]
(This follows from the Cauchy-Schwarz inequality.) The assignment
\[  \pi(a) ( b + I ) \vcentcolon= ab + I \qquad (a, b \in A) \]
defines a unital representation $\pi$ of $A$ on the pre-Hilbert space
$H \neq \{ 0 \}$. (This uses the fact that the sesquilinear form is a Hilbert form.)
The vector $e+I$ is (algebraically) cyclic for the representation $\pi$, and the positive linear
functional $\varphi$ can be regained from the cyclic representation $\pi$ via
\[ \varphi (a) = \langle \pi(a) ( e + I ), ( e + I ) \rangle
\quad \text{for all} \quad a \in A. \]
\end{reminder}

\begin{reminder}[semiperfect]%
A commutative unital \st-algebra $A$ is said to be \emph{semiperfect}, if every positive
linear functional $\varphi$ on $A$ with $\varphi(e) = 1$ has a representing
measure $\mu$ in the sense that
\[ \varphi(a) = \int \widehat{a} \,d \mu \quad \text{for all} \quad a \in A \]
for some probability measure $\mu$ on a subset of $\Delta^*(A)$.%
\end{reminder}

\begin{theorem}\label{semiper}%
A Hermitian commutative unital \st-algebra is semiperfect.%
\end{theorem}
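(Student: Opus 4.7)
The plan is to combine the GNS construction (Reminder \ref{GNS}) with Theorem \ref{main}, taking for $\mu$ the scalar spectral measure of the cyclic GNS vector, transported to $\Delta^*(A)$. Given a non-zero positive linear functional $\varphi$ on $A$, the GNS construction first produces a unital representation $\pi$ of $A$ in a pre-Hilbert space $H \neq \{0\}$ with cyclic vector $\zeta := e+I$ satisfying
\[ \varphi(a) = \langle \pi(a)\,\zeta, \zeta \rangle \quad \text{for all } a \in A; \]
note that $\zeta$ automatically lies in the algebraic domain of each $\pi(a)$, and therefore in the domain of every closure $\overline{\pi(a)}$.

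Since $A$ is Hermitian commutative unital and $H \neq \{0\}$, Theorem \ref{main} now applies and furnishes a $\sigma$-algebra $\mathcal{E}$ on $\mathrm{sp}(\pi) \subset \Delta^*(A)$ together with a spectral measure $P$ on $\mathcal{E}$ such that $\overline{\pi(a)} = \int_{\mathrm{sp}(\pi)} \widehat{a}\,|_{\,\mathrm{sp}(\pi)} \, dP$ for every $a \in A$. I would then introduce the finite positive measure $\nu$ on $(\mathrm{sp}(\pi), \mathcal{E})$ defined by $\nu(E) := \langle P(E)\,\zeta, \zeta \rangle$, and transport it to a positive measure $\mu$ on $\Delta^*(A)$ concentrated on $\mathrm{sp}(\pi)$, its natural domain being the $\sigma$-algebra of those $F \subset \Delta^*(A)$ with $F \cap \mathrm{sp}(\pi) \in \mathcal{E}$.

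The disintegration then drops out of the scalar spectral integral identity. Because $\zeta$ lies in the domain of $\overline{\pi(a)}$, the function $\widehat{a}\,|_{\,\mathrm{sp}(\pi)}$ is $\nu$-square-integrable, and the finiteness of $\nu$ combined with Cauchy-Schwarz yields $\nu$-integrability; the standard formula for a scalar spectral integral then gives
\[ \varphi(a) = \langle \pi(a)\,\zeta, \zeta \rangle = \langle \overline{\pi(a)}\,\zeta, \zeta \rangle = \int_{\mathrm{sp}(\pi)} \widehat{a}\,|_{\,\mathrm{sp}(\pi)} \, d\nu = \int_{\Delta^*(A)} \widehat{a} \, d\mu \]
for every $a \in A$, which is the disintegration required by the definition of semiperfect.

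I do not expect any serious obstacle: the proposition is essentially a corollary of Theorem \ref{main} via GNS, and the hard work has already been done in establishing Theorem \ref{main}. The only points deserving explicit attention are routine bookkeeping --- namely the choice of $\sigma$-algebra on $\Delta^*(A)$ carrying $\mu$, the integrability of $\widehat{a}\,|_{\,\mathrm{sp}(\pi)}$ against the finite measure $\nu$, and the invocation of the standard formula relating an operator-valued spectral integral to its scalar counterpart on a fixed vector of the domain.
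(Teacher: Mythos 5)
Your proposal is correct and follows essentially the same route as the paper: GNS representation with cyclic vector $e+I$, then Theorem \ref{main} to obtain the spectral measure $P$, and finally $\mu(\omega) = \langle P(\omega)(e+I), (e+I)\rangle$ as the disintegrating measure. The paper states this in three lines and leaves implicit exactly the bookkeeping you spell out (the $\sigma$-algebra carrying $\mu$, integrability of $\widehat{a}$ against the finite measure, and the scalar form of the spectral integral on a domain vector), all of which you handle correctly.
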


\begin{proof}
Let $\varphi$ be a positive linear functional on a Hermitian commutative
unital \st-algebra $A$ such that $\varphi(e) = 1$. Consider the cyclic unital
representation $\pi$ associated to $\varphi$ by the GNS construction.
Let $P$ be the spectral measure provided by Theorem \ref{main}.
Now $\varphi$ can be regained from $\pi$ via
\[ \varphi (a) = \langle \pi(a) ( e + I ), ( e + I ) \rangle
\quad \text{for all} \quad a \in A. \]
We may then take the probability measure given by
\[ \mu : \Delta \mapsto \langle P(\Delta) ( e + I ), ( e + I ) \rangle \]
as a representing measure.%
\end{proof}

\begin{lemma}\label{field}%
   A \st-algebra, which is a field different from $\mathds{C}$,
   carries no positive linear functionals different from zero.
   (This applies for example to the field $\mathds{C}(x)$ of
   rational functions endowed with the involution making
   the variable $x$ Hermitian, cf.\ \cite{Len}.)
\end{lemma}

\begin{proof}
Let $A$ be a \st-algebra, which is a field different from $\mathds{C}$.
We first note that the elements of $A \setminus \mathds{C}e$ have empty
spectrum because $A$ is a field. It follows that $A$ is Hermitian. It also
follows that there is no multiplicative linear functional on $A$. Indeed,
for $a \in A \setminus \mathds{C}e$, we would run into a contradiction
with formula (\ref{multu}) of subsection \ref{homspec}, by the fact that
$\s (a)$ is empty as noted above. The statement follows now from the
semiperfectness of $A$ guaranteed by the preceding theorem \ref{semiper}.%
\end{proof}

\begin{theorem}\label{zerorep}%
   A \st-algebra, which is a field different from $\mathds{C}$,
   does not have any representation on pre-Hilbert spaces
   other than the zero representations.
   (This applies for example to the field $\mathds{C}(x)$ of
   rational functions endowed with the involution making
   the variable $x$ Hermitian.)
\end{theorem}

\begin{corollary}[See \hbox{\cite[Theorem 2.1.12 p.\ 38]{Schm}} for a stronger result]%
A \st-algebra of operators on a pre-Hilbert space which is
a field consists of scalar multiples of the identity operator.
\end{corollary}

\begin{reminder}[the radical]\label{radical}%
Let $A$ be an algebra. The radical $\mathrm{rad}(A)$ of $A$ is the
set of those elements $a$ of $A$ such that $e+ba$ is invertible in the
unitisation of $A$ for all $b \in A$ cf.\ \cite[Proposition III.24.16 (ii) p.\ 125]{BD}.
Please note that this definition of the radical can be given in terms of the
spectrum in the algebra.

An element $a$ of $\mathrm{rad}(A)$ satisfies
$\s_A (a) \subset \{ 0 \}$, \cite[Proposition III.24.16 (i) p.\ 125]{BD}.

An algebra is called radical if it coincides with its own radical.
For example, the radical of an algebra is a radical algebra.
(This follows from \cite[Corollary III.24.20 p.\ 126]{BD}.)
\end{reminder}

We get a conceptually easy proof of an essential theorem of Torben Maack Bisgaard
\ref{Bis} below.

\begin{observation}[see e.g.\ \hbox{\cite[Proposition I.3.62 p.\ 95]{Hel}}]\label{Helem}%
A multiplicative linear functional on an algebra vanishes on the radical.
\end{observation}

\begin{proof}
Indeed, we have
\[ \tau(a) \in \s(a) \subset \{ 0 \} \]
for any multiplicative linear functional $\tau$
and any $a$ in the radical, as can be seen from
the second to last statement of the preceding reminder
\ref{radical} and from formula (\ref{multu}) from subsection \ref{homspec}.
\end{proof}

\begin{observation}\label{radHerm}%
The radical of a \st-algebra is a Hermitian \st-algebra.
\end{observation}

\begin{proof}
Let $A$ be a \st-algebra.
It is easily seen from the definition that the radical
$\mathrm{rad}(A)$ is a \st-subalgebra of $A$,
using the involution on the definition as in reminder
\ref{radical}, as well as the elementary facts that
$\s(a^*) = \overline{\s(a)}$ for any $a \in A$
and that $\s(ab) \setminus \{ \,0 \,\} = \s(ba) \setminus \{ \,0 \,\}$,
cf. \ \cite[Proposition (7.12) p.\ 37]{Thill}.
Using the fact that $\mathrm{rad}(A)$ is a radical algebra,
we find that an element $a$ of $\mathrm{rad}(A)$ satisfies
$\s_{\mathrm{rad}(A)} (a) \subset \{ 0 \}$, as is seen from
the preceding reminder \ref{radical}. This implies in
particular that the radical $\mathrm{rad}(A)$ is Hermitian.
\end{proof}

\begin{corollary}\label{cor}%
A positive linear functional on some commutative unital \st-algebra vanishes on the radical.
\end{corollary}

\begin{proof}
Consider a commutative unital \st-algebra $A$.
Then $\mathds{C}e + \mathrm{rad}(A)$ is a unital Hermitian
\st-algebra by the preceding observation \ref{radHerm},
using that the spectrum in an algebra is the same
as in the unitisation, cf.\ section \ref{unitspec}.
Thus $\mathds{C}e + \mathrm{rad}(A)$ is semiperfect
by the above theorem \ref{semiper}.
The statement follows now from observation \ref{Helem}.
\end{proof}

The assumption of commutativity can be dropped, by considering a suitable
commutative \st-subalgebra, as we shall do in the following proof.

\begin{theorem}[Torben Maack Bisgaard \cite{TMB}]\label{Bis}%
A positive linear functional on a unital \st-algebra vanishes on the radical.
\end{theorem}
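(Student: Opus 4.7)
The plan is a two-step argument: first reduce to the case of a Hermitian element of the radical via Cauchy--Schwarz, and then pull the functional back along a $*$-homomorphism from the localisation $\mathds{C}[x]_{(x)}$ and apply the semiperfectness Theorem \ref{semiper}.

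Let $r \in \mathrm{rad}(A)$ and $\varphi$ be a positive linear functional on $A$. Cauchy--Schwarz for the Hilbert form $(a,b) \mapsto \varphi(b^*a)$ of Reminder \ref{GNS} gives $|\varphi(r)|^2 \leq \varphi(e)\,\varphi(r^*r)$. The set $\mathrm{rad}(A)$ is a left ideal (if $r \in \mathrm{rad}(A)$ and $s \in A$, then $e + b(sr) = e + (bs)r$ is invertible for every $b$), hence $r^*r \in \mathrm{rad}(A)$. Since $r^*r$ is Hermitian, it suffices to prove $\varphi(a) = 0$ for an arbitrary Hermitian $a \in \mathrm{rad}(A)$; such $a$ satisfies $\mathrm{sp}_A(a) \subset \{0\}$ by Reminder \ref{radical}.

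The key construction is the localisation $R_0 := \mathds{C}[x]_{(x)}$, that is, the $*$-algebra of rational functions $p(x)/q(x)$ with $q(0) \neq 0$, equipped with the involution making $x$ Hermitian. An element $f \in R_0$ is invertible exactly when $f(0) \neq 0$, so $\mathrm{sp}_{R_0}(f) = \{f(0)\}$ for every $f \in R_0$. Consequently $R_0$ is a commutative Hermitian unital $*$-algebra and $\Delta^*(R_0) = \Delta(R_0) = \{\mathrm{eval}_0\}$ is a single point. For any polynomial $q$ with $q(0) \neq 0$ and Hermitian $a \in \mathrm{rad}(A)$, one has $q(a) = q(0)(e + s)$ with $s \in \mathrm{rad}(A)$, and hence $q(a)$ is invertible in $A$ by the very definition of the radical. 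This yields a unital $*$-homomorphism $\gamma : R_0 \to A$ sending $x \mapsto a$, and then $\psi := \varphi \circ \gamma$ is a positive linear functional on $R_0$.

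If $\psi = 0$ we are done; otherwise Theorem \ref{semiper} provides a positive measure $\mu$ on the singleton $\Delta^*(R_0) = \{\mathrm{eval}_0\}$ such that $\psi(f) = \int \widehat{f}\,d\mu = \mu(\Delta^*(R_0))\cdot f(0)$ for every $f \in R_0$. Taking $f = x$ yields $\varphi(a) = \psi(x) = 0$, as required. The main obstacle I foresee is purely the algebraic bookkeeping around $R_0$: establishing that its invertibles are precisely the non-vanishing-at-$0$ elements, that it is Hermitian with a single Hermitian character, and that $\gamma$ is well-defined and $*$-preserving. Once these verifications are in place, the conclusion falls out of Theorem \ref{semiper}.
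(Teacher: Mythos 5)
Your argument is correct, and it diverges from the paper's proof at the key middle step. Both proofs begin identically: Cauchy--Schwarz for the form $(a,b)\mapsto\varphi(b^*a)$ reduces the problem to showing $\varphi$ kills the Hermitian element $r^*r\in\mathrm{rad}(A)$, and both end by invoking the semiperfectness Theorem~\ref{semiper} for a commutative Hermitian unital $*$-algebra whose characters all annihilate the relevant element. Where you differ is in how you manufacture that algebra. The paper stays inside $A$: it passes to the second commutant $B$ of $c^*c$, a commutative unital $*$-subalgebra with spectral permanence $\mathrm{sp}_B=\mathrm{sp}_A$, deduces $c^*c\in\mathrm{rad}(B)$, and applies semiperfectness to $\mathds{C}e+\mathrm{rad}(B)$, using the cited facts that the radical is a radical algebra and that its elements have spectrum in $\{0\}$. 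You instead leave $A$ and pull $\varphi$ back along a unital $*$-homomorphism $\gamma:\mathds{C}[x]_{(x)}\to A$, $x\mapsto a$; the well-definedness of $\gamma$ rests only on the observation that $q(a)=q(0)(e+ba)$ is invertible whenever $q(0)\neq 0$, which is literally the defining property of the radical, and the target computation $\psi(x)=0$ is immediate because $\Delta^*(\mathds{C}[x]_{(x)})$ is the single point $\mathrm{eval}_0$. Your route trades the second-commutant machinery and the radical-algebra lemma of Reminder~\ref{radical} for some routine bookkeeping about the localisation (its invertibles, its Hermitian-ness, the single character, and the $*$-compatibility of $\gamma$), all of which you correctly identify and all of which check out; positivity survives the pullback because $\gamma$ is a $*$-homomorphism, and your separate treatment of the case $\psi=0$ covers the non-vanishing hypothesis in the definition of semiperfectness. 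The two proofs are of comparable depth -- both ultimately rest on Theorem~\ref{semiper}, hence on the GNS construction and the spectral theorem -- but yours is somewhat more self-contained in its algebraic preliminaries, at the cost of introducing an auxiliary algebra external to $A$.
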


\begin{proof}
Let $A$ be a unital \st-algebra, let $\varphi$ be a positive linear functional
on $A$, and let $c \in \mathrm{rad}(A)$. To prove that $\varphi (c) = 0$, it
suffices to show that $\varphi (c^*c) = 0$ by the Cauchy-Schwarz inequality
$| \,\varphi (c) \,| \leq \varphi (e) \varphi (c^*c)$. Please note that
$c^*c \in \mathrm{rad}(A)$ as well. The second commutant $B$ of
$c^*c$ in $A$ is a commutative unital \st-subalgebra of $A$, cf.\
\cite[Observation (19.6) p.\ 74]{Thill}, such that $\s_B (b) = \s_A (b)$
for all $b \in B$, cf.\ \cite[Lemma (19.8) p.\ 74]{Thill}. It follows that
$\mathrm{rad}(A) \cap B \subset \mathrm{rad}(B)$, by the possibility to define
the radical through the spectrum. In particular $c^*c \in \mathrm{rad}(B)$.
The statement follows now from the preceding theorem \ref{cor}
applied to the commutative unital \st-algebra $B$.%
\end{proof}

\begin{reminder}[extensibility]\label{extens}%
Let $A$ be a \st-algebra. A positive linear functional on $A$
is called \emph{extensible}, if it can be extended to a positive linear
functional on the unitisation of $A$. Cf.\ \cite[Proposition (35.3) p.\ 160 f.]{Thill}.
\end{reminder}

For example, if $\pi$ is a representation of $A$ on a
pre-Hilbert space $H$, then the positive linear functionals
given by
\[ \varphi(a) \vcentcolon= \slangle \pi(a) x, x \srangle \qquad (a \in A) \]
are extensible for all vectors $x \in H$, by considering
a representation of the unitisation of $A$ extending $\pi$.

\begin{corollary}
An extensible positive linear functional on a \st-algebra
vanishes on the radical.
\end{corollary}

\begin{proof}
Consider the unitisation $\widetilde{A}$ of a \st-algebra $A$,
and use that the radical of $A$ is contained in (and thus coincides with)
the radical of $\widetilde{A}$, by applying the definition of the radical
as in reminder \ref{radical}, and the fact that $\mathrm{rad}(\widetilde{A})$
is a subspace of $\widetilde{A}$ and does not contain the unit, as
the spectrum of an element in a radical consists of zero at most,
cf.\ reminder \ref{radical}.
\end{proof}

\begin{theorem}\label{repvanrad}%
A representation of a \st-algebra on a pre-Hilbert space
vanishes on the radical.%
\end{theorem}

\begin{corollary}%
   A radical \st-algebra
   does not have any representation on
   pre-Hilbert spaces other than the zero representations.
\end{corollary}

This applies for instance to convolution \st-algebras
on an interval of the positive half-line.
Cf.\ \cite[vol.\ I, \S\ 2.IV.15 p.\ 179 ff.; vol.\ II, \S\ 6.V.1 p.\ 156 f.]{Mik}
or \cite[\S\ 6.V.1 p.\ 369 f.]{Mikold}.

\begin{reminder}[semisimple]%
An algebra is called \emph{semisimple} if its radical consists of $0$ alone.
\end{reminder}

\begin{reminder}[faithful]%
A representation of a \st-algebra on a pre-Hilbert space
is called \emph{faithful} if it is injective.
\end{reminder}

\begin{corollary}
A \st-algebra that admits a faithful representation on a pre-Hilbert space
is semisimple, by theorem \ref{repvanrad}.%
\pagebreak
\end{corollary}

\begin{corollary}[see \hbox{\cite[Corollary 7.24 p.\ 150]{SchI}}]%
If a Hermitian commutative unital \st-algebra $A$ admits a faithful unital
representation $\pi$ on a pre-Hilbert space, then the set of Hermitian
characters $\Delta^*(A)$ separates the points of $A$.%
\end{corollary}

\begin{proof}
Let $a \in A$. Then $\pi(a) \neq 0$ as $\pi$ is faithful.
So there exists $\tau \in \Delta^*(A)$ with $\tau(a) \neq 0$,
by the main result \ref{main}.%
\end{proof}

The \st-radical is defined as the intersection of all representations
of a \linebreak \st-algebra on pre-Hilbert spaces.
To make that rigorous, cf.\ Schm\"{u}dgen \cite[p.\ 81 f.]{SchI}.
Theorem \ref{repvanrad} says that the radical
of any \st-algebra is contained in the \st-radical.
This is of concern to factoring over the radical.



\begin{thebibliography}{99}
\bibitem{AMR} R.\ Abraham, J.\ E.\ Marsden, T.\ Ratiu.
   \emph{Manifolds, Tensor Analysis, and Applications.}
   second edition, applied mathematical sciences 75,
   Springer-Verlag, 1988.
\bibitem{BereI} Ju.\ M.\ Berezanski\u{\i}.
   \emph{Expansions in Eigenfunctions of Self-Adjoint Operators}.
   Volume Seventeen, Translations of Mathematical Monographs,
   AMS, Providence, Rhode Island, 1968.
\bibitem{BereII} Ju.\ M.\ Berezanski\u{\i}.
   \emph{Self-Adjoint Operators in Spaces of Functions of Infinitely Many Variables}.
   Volume 63, Translations of Mathematical Monographs,
   AMS, Providence, Rhode Island, 1986.
\bibitem{TMB} T.\ M.\ Bisgaard. \emph{Positive Linear Forms Vanish
   on the Radical.} Positivity 4, 115--118, 2000.
\bibitem{BD} F.\ F.\ Bonsall, J.\ Duncan. \emph{Complete Normed
   Algebras.} Ergebnisse der Math. und ihrer Grenzgebiete, Band 80,
   Springer-Verlag, Berlin - Heidelberg - New York, 1973.
\bibitem{DS} N.\ Dunford \& J.\ T.\ Schwartz. \emph{Linear Operators,
   Part II: Spectral Theory.} Wiley Classics Library Edition, Interscience
   Publishers, John Wiley \& Sons, 1988.
\bibitem{Hel} A.\ Ya.\ Helemski\u{\i}. \emph{Banach and Locally Convex Algebras.}
   Oxford University Press, New York, 1993.
\bibitem{Len}  A.\ Lenard. \emph{Function Algebras without Nontrivial Positive Linear Forms.}
   Informal note (see Notices Amer. Math. Soc.23, 224 (1976)).
\bibitem{Mikold} J.\ Mikusi\'{n}ski. \emph{Operational Calculus.}
   Pergamon Press, London, 1959.
\bibitem{Mik} J.\ Mikusi\'{n}ski, Th.\ K.\ Boehme. \emph{Operational Calculus.} 2nd Edition,
   2 volumes, International Series in Pure and Applied Mathematics, volumes 109, 110,
   Pergamon Press, PWN--Polish Scientific Publishers, Warszawa, 1983, 1987.
\bibitem{Rudin} W.\ Rudin. \emph{Functional Analysis.}
   Second Edition, McGraw-Hill, 1991.
\bibitem{Sam} Y.\ S.\ Samoilenko. \emph{Spectral Theory of Families of
  Self-Adjoint Operators.} Mathematics and its Applications 57, Springer, 1991.
\bibitem{SchIn} Y.\ Savchuk, K.\ Schm\"{u}dgen. \emph{Unbounded Induced
   Representations of \st-Algebras.}
   \href{https://arxiv.org/abs/0806.2428v4}{arXiv:0806.2428v4 [math.RT]}, 2011.
\bibitem{Schm} K.\ Schm\"{u}dgen. \emph{Unbounded Operator
   Algebras and Representation Theory.} Operator Theory: Advances and
   Applications, vol.\ 37, Birkh\"{a}user Verlag, Basel - Boston - Berlin, 1990.
\bibitem{SchU} K.\ Schm\"{u}dgen, \emph{Unbounded Self-Adjoint Operators
   on Hilbert Space.} GTM 265, Springer, Dordrecht, 2012.
\bibitem{SchI} K.\ Schm\"{u}dgen. \emph{An Invitation to Unbounded
   Representations of \st-Algebras on Hilbert Space}. GTM 285, Springer, 2020.
\bibitem{Thill} M.\ Thill. \emph{Introduction to Normed \st-Algebras
   and their Representations, 8th edition, 3rd corrected printing.} 2022.
\bibitem{Weid} J.\ Weidmann. \emph{Linear Operators in Hilbert Spaces.}
   GTM 68, Springer-Verlag, 1980.%
\pagebreak
\end{thebibliography}
\end{document}